\theoremstyle{plain}
\newtheorem{theorem}{Theorem}[section]
\newtheorem{lemma}[theorem]{Lemma}
\newtheorem{definition}[theorem]{Definition}
\newtheorem{remark}[theorem]{Remark}
\newtheorem{proposition}[theorem]{Proposition}
\newtheorem*{theorem A}{Theorem A}
\newtheorem*{theorem B}{Theorem B}
\newtheorem*{theorem C}{Theorem C}
\begin{document}
\title[Invariant graph and random bony attractors ]{Invariant graph and random bony attractors}
\author[Ghane]{F. H. Ghane$^*$}
\address{\centerline{Department of Mathematics, Ferdowsi University of Mashhad, }
\centerline{Mashhad, Iran.}  }
\email{ghane@math.um.ac.ir}
 \thanks{$^*$Corresponding author}
\author[Rabiee]{M.Rabiee}
\address{\centerline{Department of Mathematics, Ferdowsi University of Mashhad, }
\centerline{Mashhad, Iran.}  }
\email{maryam\_rabieefarahani@yahoo.com}
\author[Zaj]{M. Zaj}
\address{\centerline{Department of Mathematics, Ferdowsi University of Mashhad, }
\centerline{Mashhad, Iran.}  }
\email{zaj.marzie@yahoo.com}
 \keywords{skew products, maximal attractor, invariant graph, bony attractor.} \subjclass[2010]
{37C70; 37C40; 37H15; 37A25.}
\maketitle
\begin{abstract}
In this paper, we deal with random attractors for dynamical systems forced by a deterministic noise.
These kind of systems are modeled as skew products where the dynamics of the forcing process are described by the base transformation. Here, we consider
skew products over the Bernoulli shift with the unit interval fiber.
We study the geometric structure of maximal attractors, the orbit stability and stability of mixing of these skew products under random perturbations of the fiber maps.
We show that there exists an open set $\mathcal{U}$ in the space of such skew products so that any skew product
belonging to this set admits an attractor which is either a continuous invariant graph or a bony graph attractor.
These skew products have negative fiber Lyapunov exponents and their fiber maps are non-uniformly contracting, hence the non-uniform contraction rates
are measured by Lyapnnov exponents.
Furthermore, each skew product of $\mathcal{U}$ admits an invariant ergodic measure whose support is contained in that
attractor.
Additionally, we show that the invariant measure for the perturbed system is continuous in the Hutchinson metric.
\end{abstract}
\maketitle
\thispagestyle{empty}

\section{Introduction}
The qualitative study of dynamical systems is concerned with the study of attractors. Knowledge of the attractors may indicate the long time behavior
of the orbits. In the most simple cases, an attractor of a dynamical system is a union of finite set of smooth manifolds. There are interesting examples of locally dynamical systems having more complicated attractors.
For example 
in \cite{Ku}, Kudryashov introduced a new type of attractors so-called bony attractors, then
he presented an open set in the space of step skew products over the Bernoulli shift such that any of them had a bony attractor.
Following \cite{Ku}, an attractor $A$ of a skew product is \emph{bony} if $A$ is the union of
the graph of a continuous function on some subset of the base and an uncountable set
of vertical closed intervals (bones) contained in the closure of the graph.
This feature is similar to porcupine horseshoes discovered by Diaz and Gelfert in \cite{DG}. Indeed, from a topological point of view, a porcupine is a transitive set that looks like a horseshoe with infinitely many spines attached at various levels and in a dense way.

The objective of this article is to extend aforementioned result from \cite{Ku} to the random case, where the skew products are
general ( not necessarily step).
One novelty here is that, in our context, in contrast the Kudryashov' case, fiber maps are non-uniformly contracting, therefore
the contraction rates are non-uniform and hence measured by Lyapunov exponents.
We will discuss maximal attractors of this kind of skew
products and
show that they  are either a continuous invariant graph or a bony attractor.
Moreover, maximal attractors, carry an invariant ergodic measure
that projects to the Bernoulli measure in the base.

Notice that, in general, dynamical systems under the external forcing are modeled, in discrete time, as skew products,
\begin{equation}\label{00}
  F: \Omega \times M \to \Omega \times M, \ \ F(\omega,x)=(\theta \omega, f_\omega(x)),
\end{equation}
where the dynamics of the forcing process are described by the base transformation $\theta$ which is assumed to be a measure-preserving transformation of a probability space
$(\Omega, \mathcal{F}, \mathbb{P})$ (random forcing).
An \emph{invariant graph} of $F$ is the graph of a measurable function $\gamma: \Omega \to M$ which satisfies $f_\omega(\gamma(\omega))=\gamma(\theta(\omega))$, for $\mathbb{P}$-almost all $\omega \in \Omega$.

In the study of forced dynamical systems of the above form, invariant graphs play a central role since
they are the natural substitutes of a stable fixed point to the case of forced systems.
Furthermore, the existence of such invariant graphs considerably simplifies the dynamics of the forced systems.
Moreover, Lyapunov exponents yield additional information about the stability and attractivity of invariant graphs.
Attracting invariant graphs have a wide variety of applications in many branches of nonlinear
dynamics (e.g. \cite{CD, DC, HOJ1, HOJ2, PC, SD, T} etc.).
 A context in which the attractivity of invariant graphs plays a central role is
generalised synchronisation, a phenomenon that has been widely studied in
theoretical physics.
In \cite{St2} Stark provides the conditions for the existence and regularity of invariant graphs
and discusses a number of applications. His results include some generalizations to the case of
non-uniform contraction.
We mention that in skew product systems with uniformly contracting
fiber maps, there exist continuous invariant attracting sets for the overall
dynamics, see \cite{HP}, Theorem 6.1a, \cite{HPS}.
Results in the non-uniform case, when the fiber map possesses negative Lyapunov exponents in
the fibre \cite{AC1, E1, E2, Z1, Z2}, are very recent and invariant graphs are very sensitive to perturbations.\\
\textbf{This work is organized as follows:}
In Subsections 1.1 and 1.2 we recall some standard definitions.
Then we state our main result in Subsection 1.3.
The proof of our main result, Theorem A below, is given in Section 2.
\subsection{Preliminaries}
Assume that $X$ is a metric measure space.
Denote by $\textnormal{int}(D)$ and $\text{Cl}(D)$, respectively, the interior and the closure of any set $D$.

Let $(X; \mathcal{B}; \mu; f)$ be a measure preserving dynamical system.
If $f$ is invertible then, based on \cite{Br, VO}, the system is  {\it Bernoulli} if it is isomorphic to a Bernoulli shift.
Clearly invertible systems cannot be isomorphic to non-invertible systems. But there
is a construction to make a non-invertible system invertible, namely by passing to the
natural extension.
For  non-invertible case, being Bernoulli means that the natural extension is isomorphic to a Bernoulli shift.

 The map $f$ is \emph{mixing} (or \emph{strong mixing}) if
$$\mu(f^{-n}(A) \cap B)\to \mu(A)\mu(B), \ as \ n \to +\infty,$$
for every $A,B \in \mathcal{B}$.
Every mixing system \cite{VO} is necessarily ergodic.

For a metric space $X$, putting
$$\textnormal{Lip}_1(X) = \{f : X \to \mathbb{R} : |f(x)- f(y)| \leq d(x, y) \ \textnormal{for} \ \textnormal{all} \ x, y \in X\},$$
define the \emph{Hutchinson metric} on the set $\mathcal{M}(X)$, the space of all Borel probability measures,
by
\begin{equation}\label{e20}
d_H(\nu,\mu) = \sup \{ |\int_X f d\nu - \int_X f d\mu : f \in \textnormal{Lip}_1(X)|\}.
\end{equation}
In \cite[Thm.~3.1]{K}, the author proved that for every metric space $X$, the topology $\mathcal{T}$ on $\mathcal{M}(X)$ generated by $d_H(\nu,\mu)$
coincides with the topology $\mathcal{W}$ of weak convergence if and only if $\textnormal{diam}(X)< \infty$.
Moreover, the space $\mathcal{M}(X)$ is complete in the metric $d_H$ if and only if $X$ is complete (see \cite[Thm.~4.2]{K}).

The concept of a weak contraction map was introduced in 1997 by Alber and
Guerre-Delabriere \cite{AG}.
We say that a continuous map $f$ is \emph{weak contraction} (or \emph{distance decreasing } \cite{Bi}) whenever for each $x,y \in X$ with $x\neq y$, $d(f(x),f(y)) < d(x,y)$.\\
It is a well-known fact \cite[Coro.~3]{J} (see also \cite{Bi}) that if $f$ is weak contraction and $X$ is compact then there exists a unique fixed point
$x \in X$ of the map $f$. Furthermore, for every $y \in X$, $\lim_{k \to \infty}f^k(y)=x$ uniformly.
Then we say that $x$ is a \emph{weak attracting fixed point}.
Clearly if $f$ is a weak contraction map then
$$d(f^{n}(y), f^{n}(z)) \to 0, \ as, \ n \to \infty,$$
for each $y, z \in X$.
\subsection{Random maps and skew products}
A \emph{random map} with base $(\Omega, \mathcal{F}, \mathbb{P}, \theta)$, in the sense of Arnold \cite{Ar}, is a skew product of the
form (\ref{00}) where $(\Omega, \mathcal{F}, \mathbb{P})$ is a probability space, $\theta : \Omega \to \Omega$ is a bi-measurable and ergodic
measure-preserving bijection and $M$ is a measurable space. If $M$ is a smooth manifold and all fibre
maps $f_\omega$ are $C^r$, we call $F$ a random $C^r$-map.

Take $\Sigma^+_k = \{0,\dots, k-1 \}^\mathbb{N}$ and $\Sigma_k = \{0,\dots, k-1 \}^\mathbb{Z}$ endowed with the product topology and
equip them with the Bernoulli measures $\nu^+$ and $\nu$, respectively, corresponding to some distribution of probabilities $p_0, \ldots, p_{k-1}$, which gives us the probability with which we apply $f_i$.
Here, assume that the probabilities $p_i$, $i=0, \ldots, k-1$, are the same and equal to $1/k$.
Let $\sigma:\Sigma_k \to \Sigma_k$ and $\sigma^+:\Sigma_k^+ \to \Sigma_k^+$ denote the one-sided and two-sided left shift.
It is well known that \cite{W2} $\sigma^+$ and $\sigma$ are ergodic transformations preserving the probabilities $\nu^+$ and $\nu$, respectively.

Let $M$ be a compact smooth manifold. Here, we consider skew products of the form
\begin{equation}
 F:\Sigma_k \times M \to \Sigma_k \times M; \ \
  (\omega,x) \to (\sigma \omega,f_{w}(x))
\label{e11}
\end{equation}
which is called a \emph{skew product over the Bernoulli shift}, where $\omega \in \Sigma_k$, $x \in M$ and the maps $f_\omega$ are $C^r$ diffeomorphisms on $M$. The space $\Sigma_k$
 is called the \emph{base}, the space $M$ is called the \emph{fiber}, and the maps $f_\omega$ are called the \emph{fiber maps}. Thus each skew product of the form (\ref{e11}) is a
 random $C^r$-map.

A skew product over the Bernoulli shift is a \emph{step skew product} if
the fiber maps $f_\omega$ depend only on the digit $\omega_0$ and not on the whole
sequence $\omega$. We emphasise, in contrast to step skew products, the fiber maps of (general) skew products of the form (\ref{e11})
depend on the whole sequence $\omega$.
When treating a step skew product for one sided time $\mathbb{N}$, this results in the skew product system $F^+$ on $\Sigma_k^+ \times M$:
\begin{equation}\label{s}
 F^+:\Sigma_k^+ \times M \to \Sigma_k^+ \times M; \ \
  (\omega,x) \to (\sigma^+ \omega,f_{w_0}(x)).
\end{equation}

We denote iterates of a skew product system $F$ of the form (\ref{e11}) as $F^n(\omega,x)=(\sigma^n(\omega), f_\omega^n(x))$. Here, for
$n \geq 1$
$$f_\omega^n(x):=f_{\sigma^{n-1}\omega} \circ \ldots \circ f_\omega(x).$$
For a step skew product system this becomes
$$f_\omega^n(x):=f_{\omega_{n-1}} \circ \ldots \circ f_{\omega_0}(x),$$
where $\omega=(\ldots, \omega_{-1}, \omega_0, \omega_1, \ldots, \omega_n, \ldots)\in \Sigma_k$.

In the rest of this article we assume that the fiber $M$ is always the unit interval $I$.

Take $\mathcal{C}(I)$ the space of all random $C^2$-maps (general skew products) acting on $\Sigma_k \times I$ defined by $C^2$ interval diffeomorphisms.
We equip $\mathcal{C}(I)$
 with the following metric:
\begin{equation}\label{e113}
  \textnormal{dist}_{C^2}(F,G):=\sup_{\omega \in \Sigma_k}(\textnormal{dist}_{C^2}(f_\omega^{\pm 1} , g_\omega^{\pm 1})), \ \textnormal{for \ each} \ F,G \in \mathcal{C}(I),
\end{equation}
where $f_\omega$ and $g_\omega$ are the fiber maps of $F$ and $G$, respectively.

Let $F: \Sigma_k \times I \to \Sigma_k \times I$ be a homeomorphism onto its image, but suppose its image is
contained strictly in $\Sigma_k \times I$. The (global) \emph{maximal attractor} of $F$ is defined as:
\begin{equation}\label{e18}
 A_{max}(F):=\bigcap_{n=0}^\infty F^n(\Sigma_k \times I).
\end{equation}
\subsection{Main results}
To state the main result precisely, the concept of a bony attractor may need to be introduced.
\begin{definition}\label{bony}
Following \cite{Ku}, an attractor $\Lambda$ of a skew product $F$ is a bony graph attractor if $\Lambda$ is the union of
the graph of a continuous function $\gamma$ defined on some set of full measure of the base and a set
of vertical closed intervals ("bones") contained in the closure of the graph.
\end{definition}
In this article, we will show that maximal attractors of a certain class of general skew products (random maps) are either a continuous invariant graph or a bony attractor.
Our novelty here is that the fiber maps of such systems depend on the whole sequence $\omega$ and hence
they are not necessarily step skew products.
Moreover, the fiber contraction rates are non-uniform
and hence measured by Lyapunov exponents, in addition, the attractors carry an ergodic measure.
Our result thus extends work by Kudryashov in \cite{Ku} who treated step skew products over the Bernoulli shift having
bony attractors.
\begin{theorem A}
There exists an open nonempty set $\mathcal{U}$ in the space $C^2$ random maps $\mathcal{C}(I)$ given by (\ref{e113}) such that any system $G$ belonging to this
set has a maximal attractor $A_{max}(G)$ satisfies the following properties:
\begin{enumerate}
  \item the maximal attractor $A_{max}(G)$
 is either a continuous invariant graph or a bony graph attractor;
 \item there exists an invariant ergodic measure $\mu_G$ whose support is the closure of the graph $\Gamma_G$, in particular, $(G, \Gamma_G, \mu_G)$ is Bernoulli and therefore it is mixing,
 additionally, the invariant measure for the perturbed system is continuous in the Hutchinson metric;
 \item the fiber Lyapunov exponent of $G$ is negative;
\end{enumerate}
Moreover, the set of random maps of $\mathcal{U}$ which admit a bony graph attractor is nonempty.
\end{theorem A}
 \section{Proof of Theorem A}
To prove Theorem A, we provide a single step skew product $F$, and show that
every skew product (random map) $G\in \mathcal{C}(I)$ (not necessarily step) which is close enough to $F$ (with respect to $\textnormal{dist}_{C^2}$ given by (\ref{e113})) satisfies the conclusion of the theorem.

Consider the interval $I = [0, 1]$ and let $\{f_0, \ldots, f_{k-1}\}$ a finite family of orientation preserving
(strictly increasing) $C^2$-diffeomorphisms defined on $I$ enjoying the following conditions:
\begin{enumerate}
  \item [(a)] The mappings $f_i$, $i=0, \ldots, k-1$, bring the unit interval $I$ strictly into itself and they are $C^2$ close to the identity.
  \item [(b)] $f_0$ is a weak contraction with a unique weak attracting fixed point $p_0$, i.e. $Df_0(p_0)=1$.
  \item [(c)] $f_i$, $i=1, \ldots, k-1$, are uniformly contracting maps such that any of them has a unique attracting fixed point $p_i$.
   \item [(d)] We have "contraction on average", i.e. for each $x \in I$, $\prod_{i=0}^{k-1} Df_i(x) <1$.
  \item [(e)] The fixed points $p_i$, $i=0, \ldots, k-1$, are pairwise disjoint, $p_i\neq 0,1$ and satisfy the no-cycle condition, i.e. $f_i(p_j)\neq p_k$ for each distinct indices $i,j$ and $k$.
   \item [(f)] Let $p_0< p_1<\ldots <p_{k-1}$ and $J = [p_0 , p_1]$. Assume we have "covering property", i.e. there exists the points $x_0$ and $x_1$ such that $p_0 < x_0 < x_1 < p_1$ and the interval $B = (x_0, x_1) \subset \text{int}(J)$ for which the following holds:
$\forall x\in [x_0,x_1], \ \|Df_i(x)\|<1 \ \text{and} \   \text{Cl}(B) \subset f_0(B) \cup f_1(B).$
\end{enumerate}
Write the step skew product
\begin{equation}
 F:\Sigma_k \times I \to \Sigma_k \times I; \ \
  (\omega,x) \to (\sigma \omega,f_{w_0}(x))
\label{e66}
\end{equation}
whose fiber maps are the mapping $f_i$, $i=0, \ldots, k-1$, which satisfy the properties listed above. Fix the skew product $F$ and take a small open ball $\mathcal{U}$
around $F$ in the space $\mathcal{C}(I)$. To prove Theorem A, we show that any system $G$
belonging to this set satisfies the conclusion of the theorem.

We define the \emph{Transfer Operator} $T : \mathcal{M}(I) \to \mathcal{M}(I)$ by the formula,
$$T(\mu)(B) := \frac{1}{k}\sum_{i=1}^k \mu(f_i^{-1}(B)),$$ for any Borel subset $B$ and for each measure $\mu \in  \mathcal{M}(I)$, where $\mathcal{M}(I)$ is the space of all Borel probability measures on $I$. If a measure $\mu \in \mathcal{M}(I)$ is
a fixed point of the transfer operator we say that $\mu$ is a \emph{stationary measure}.
\begin{remark}\label{rems}
The following two facts hold:
\begin{enumerate}
  \item The contraction on average condition given by $(d)$ ensuring \cite{S} the existence of a unique attractive stationary probability measure $m$ in the
sense that $T^n \mu$ converges weakly to $m$, for any probability measure $\mu \in \mathcal{M}(I)$.
  \item For the skew product $F^+$ of the form (\ref{s}) with the fiber maps $f_i$, $i=0, \ldots, k-1$, the product measure $\nu^+ \times m$ is an ergodic invariant measure.
The skew product $F$ given by (\ref{e66}) is the natural extension of $F^+$.
Invariant measures for $F^+$ with marginal $\nu^+$ and invariant measures for $F$ with
marginal $\nu$ are in one to one relationship, as detailed in \cite{Ar}.
A stationary measure $m$ thus, through the invariant measure $\nu^+ \times m$ for $F^+$,
gives rise to an invariant measure $\mu$ for $F$, with marginal $\nu$.
\end{enumerate}
\end{remark}
\subsection{Fiber Lyapunov exponents}
For each Lipschitz map $f : I \to I$ we define the norm $\|.\|$ by
\begin{equation}\label{e011}
\|f\|:=\sup_{x \neq x^{\prime}}\frac{|f(x)-f(x^{\prime})|}{|x-x^{\prime}|}.
\end{equation}
It is easily seen that, whenever $f$ is $C^1$, by Mean value theorem for real-valued functions,
$$\|f\|=\sup \{\|Df(x)\|: x\in I\}.$$
For the skew product $F(\omega,x)=(\sigma \omega,f(\omega,x))=(\sigma \omega,f_{\omega_0}(x))$ given by (\ref{e66}), consider a sequence of functions $\varphi^n$ defined by $\varphi^n(\omega)=\|f^n(\omega,.)\|$.
It is simply verified that the family of functions $\{a_n\}$ defined by $a_n(\omega)=\log (\varphi^n(\omega))$
is subadditive.
By definition of the mappings $f_i$, $i=0, \ldots, k-1$, and the functions $\varphi^n$ one has that $\log^+ (\varphi^1) \in L^1(\nu)$, hence
by Kingman's Subadditive Theorem \cite[Thm.~3.3.3]{VO}, the limit
\begin{equation}\label{L}
 \lambda(\omega):=\lim_{n \to \infty}\frac{1}{n} \log \|f^n(\omega,.)\|
\end{equation}
exists at $\nu$-almost every point. Moreover, the function $\lambda \in  L^1(\nu)$ and
\begin{equation}\label{inf}
\lim_{n \to \infty}\frac{1}{n} \log \|f^n(\omega,.)\|=\inf_n \frac{1}{n} \log \|f^n(\omega,.)\|.
\end{equation}
By ergodicity of $\nu$, the limit (\ref{L}) is constant, denoted by $\lambda$. The contraction on average property given by condition $(d)$ ensures that $\lambda$ is negative.
 The constant $\lambda$ is called the \emph{m-fiber Lyapunov exponent} with respect to $\nu$.
 \begin{lemma}\label{cor0}
 There exists an open subset $\mathcal{U}\subset \mathcal{C}(I)$ containing $F$ such that any skew product $G$ belonging to this set admits a negative $m$-fiber Lyapunov exponents with respect to $\nu$.
 \end{lemma}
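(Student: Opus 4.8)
The plan is to exploit the infimum characterization of the fiber Lyapunov exponent already recorded in (\ref{inf}) together with Kingman's theorem, so that negativity of $\lambda$ becomes a \emph{finite-time} condition that is stable under $C^2$-small perturbations. For any $G \in \mathcal{C}(I)$ with fiber maps $g_\omega$ close enough to $F$, I first observe that the same subadditive set-up goes through verbatim: the functions $a_n^G(\omega) := \log \|g_\omega^n(\cdot)\|$ form a subadditive cocycle over the ergodic map $(\sigma,\nu)$ with $\log^+ a_1^G \in L^1(\nu)$, since the fiber maps remain diffeomorphisms with derivatives uniformly bounded above and below. Hence Kingman's Subadditive Theorem applies to $G$ as well, the limit $\lambda_G := \lim_n \tfrac1n a_n^G$ exists $\nu$-a.e., is constant by ergodicity, and satisfies
\[
\lambda_G = \inf_{n} \frac1n \int_{\Sigma_k} \log \|g_\omega^n(\cdot)\| \, d\nu(\omega) \le \frac1N \int_{\Sigma_k} \log \|g_\omega^N(\cdot)\| \, d\nu(\omega) =: \Phi_N(G)
\]
for every fixed $N$.

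Set $\Phi_N(F) = \tfrac1N \int \log \|f_\omega^N(\cdot)\| \, d\nu$. Since by (\ref{inf}) and ergodicity the exponent of $F$ equals $\inf_n \Phi_n(F) = \lambda < 0$, I fix once and for all an integer $N$ with $\Phi_N(F) < 0$; such an $N$ exists precisely because the infimum is negative (indeed $\Phi_n(F) \to \lambda$). The whole argument then reduces to showing that $G \mapsto \Phi_N(G)$ is continuous at $F$ with respect to $\textnormal{dist}_{C^2}$ of (\ref{e113}); granting this, there is an open ball $\mathcal{U}$ around $F$ on which $\Phi_N(G) < 0$, and the displayed inequality yields $\lambda_G \le \Phi_N(G) < 0$ for all $G \in \mathcal{U}$, which is the assertion.

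For the continuity step I would expand the fiber derivative by the chain rule,
\[
\|g_\omega^N(\cdot)\| = \sup_{x \in I} \prod_{j=0}^{N-1} \bigl| g'_{\sigma^j\omega}\bigl(g_\omega^j(x)\bigr) \bigr|,
\]
and compare it term by term with the corresponding product for $F$, whose $j$-th factor is $f'_{\omega_j}(f_\omega^j(x))$. If $\textnormal{dist}_{C^2}(F,G) < \varepsilon$ then each $g_{\sigma^j\omega}$ is $C^1$-$\varepsilon$-close to $f_{\omega_j}$ uniformly in $\omega$, and the intermediate orbit points $g_\omega^j(x)$ lie within $O(\varepsilon)$ of $f_\omega^j(x)$; since $N$ is fixed and every factor is bounded and bounded away from zero, the finite product — and hence $\|g_\omega^N(\cdot)\|$ — converges to $\|f_\omega^N(\cdot)\|$ uniformly in $\omega$ as $\varepsilon \to 0$. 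Consequently the integrands $\log \|g_\omega^N(\cdot)\|$ are uniformly bounded and converge uniformly to $\log \|f_\omega^N(\cdot)\|$, so dominated convergence gives $\Phi_N(G) \to \Phi_N(F)$, completing the proof.

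The step I expect to be the main obstacle is not the continuity estimate itself — which is routine because $N$ is finite and $\textnormal{dist}_{C^2}$ controls the fiber data uniformly in $\omega$ — but rather making sure the infimum characterization is legitimately available for the perturbed systems $G$, whose fiber maps depend on the \emph{entire} sequence $\omega$ rather than on $\omega_0$ alone. The point to check carefully is that subadditivity of $a_n^G$ and the integrability $\log^+ a_1^G \in L^1(\nu)$ survive this dependence, which they do because $\textnormal{dist}_{C^2}$ keeps the $g_\omega$ uniformly bounded in $C^1$; once this is in place, the one-sided bound $\lambda_G \le \Phi_N(G)$ is exactly the mechanism that converts an asymptotic quantity into a perturbation-stable finite-time one. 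Equivalently, one may phrase the conclusion as: $G \mapsto \lambda_G = \inf_n \Phi_n(G)$ is an infimum of continuous functions, hence upper semicontinuous at $F$, and upper semicontinuity together with $\lambda_F < 0$ yields a neighborhood on which the exponent stays negative.
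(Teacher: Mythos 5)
Your proof is correct and follows essentially the same route as the paper: the paper's own argument likewise rests on the infimum characterization (\ref{inf}) coming from Kingman's theorem, so that negativity of the exponent reduces to a finite-time condition that persists under $C^2$-small perturbations controlled by the metric (\ref{e113}). Your write-up is in fact more careful than the paper's two-line proof, since you make explicit the upper-semicontinuity mechanism $\lambda_G \le \Phi_N(G)$ and the continuity of $G \mapsto \Phi_N(G)$, whereas the paper simply asserts the conclusion (and even misstates the perturbed exponent as exactly $\lambda+\varepsilon$ rather than as a bound).
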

\begin{proof}
Take small neighborhoods $U_i \subset \textnormal{Diff}^2(I)$ of the fiber maps $f_i$, $i=0, \ldots, k-1$, of $F$ and let $\mathcal{U}\subset \mathcal{C}(I)$ a small open neighborhood of $F$ enjoying the following property:
 there exists a constant $C>0$ such that for any $G \in \mathcal{U}$ with $G(\omega,x)=(\sigma \omega,g(\omega,x))=(\sigma \omega,g_\omega(x))$ one has that
 \begin{equation}\label{c2}
 \ \forall \omega \in \Sigma_k, \ \text{the \ map} \ g_\omega \in U_{\omega_0}, \  \text{and} \
\textnormal{dist}_{C^2}(g_\omega, g_{\omega^{\prime}})< C d(\omega, \omega^{\prime}), \ \text{for \ any} \ \omega, \omega^{\prime}\in \Sigma_k.
 \end{equation}
Then, by this fact and (\ref{inf}), for given a sufficiently small $\varepsilon > 0$ there exists $\delta > 0$ such that if $\text{diam}(U_i)< \delta$ then
\begin{equation}\label{lyapunov}
\lim_{n \to \infty}\frac{1}{n}\textnormal{log} \| g^n(\omega,.)\|=\lambda + \varepsilon < 0, \ \textnormal{for} \ \nu \ a.e. \ \omega \in \Sigma_k.
\end{equation}
In particular, $G$ possesses a negative $m$-fiber Lyapunov exponent.
\end{proof}
\subsection{Maximal attractors and invariant graphs}
For the step skew product $F$ given by (\ref{e66}) and any general skew product $G \in \mathcal{U}$, consider the maximal attractors $ A_{max}(F)$ and $ A_{max}(G)$, respectively, defined by
\begin{equation}\label{e67}
  A_{max}(F):=\bigcap_{n\geq 0}F^n(\Sigma_k \times I), \ \ A_{max}(G):=\bigcap_{n\geq 0}G^n(\Sigma_k \times I).
\end{equation}
A first main step in the proof of Theorem A is to show that the attractor is an invariant graph.
For that, we get the next proposition which is an analogue of \cite[Thm.~5]{C2}, \cite[Thm.~1.4]{St2} and \cite[Pro.~2.3]{BHN} to our setting.
\begin{proposition}\label{thm000}
Consider the skew product $F$ given by (\ref{e66}). For each general skew product $G \in \mathcal{U}$, given by Lemma \ref{cor0}, there exists a measurable function $\gamma_G:\Omega \subseteq \Sigma_k \to I$, with $\nu(\Omega)=1$ such that $\Gamma_G$ the graph of $\gamma_G$ is invariant under $G$.
The closure of the graph $\Gamma_G$ is the support of an invariant  ergodic measure $\mu_G$, in particular, $(G,\Gamma_G,\mu_G)$ is Bernoulli and hence it is mixing.
Furthermore, $\Gamma_G$ is attracting in the sense that for $ \omega \in \Omega$, $\lim_{n \to \infty}|\pi_x(G^n(\omega,x))- \gamma_G(\sigma^n \omega)|=0$ for every $x \in I$, where $\pi_x$ is the natural projection from $\Sigma_k \times I$ to $I$.
\end{proposition}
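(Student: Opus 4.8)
The plan is to realize $\gamma_G$ as a pullback (past) limit and to read off all three conclusions from the negativity of the fibre Lyapunov exponent supplied by Lemma \ref{cor0}. First I would note that, since the fibre maps $f_i$ send $I$ strictly into itself (condition (a)), for a small enough neighbourhood $\mathcal{U}$ every $G\in\mathcal{U}$ has fibre maps $g_\omega$ with $g_\omega(I)\subset\mathrm{int}(I)$. Consequently the pullback images $g^n_{\sigma^{-n}\omega}(I)=g_{\sigma^{-1}\omega}\circ\cdots\circ g_{\sigma^{-n}\omega}(I)$ form a nested decreasing sequence of compact intervals, with $\mathrm{diam}\big(g^n_{\sigma^{-n}\omega}(I)\big)\le\|g^n_{\sigma^{-n}\omega}\|$. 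I would then define $\gamma_G(\omega)$ to be the unique point of $\bigcap_{n}g^n_{\sigma^{-n}\omega}(I)$, once the diameters are shown to tend to $0$ almost everywhere.

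The main obstacle is exactly this decay, because the base point $\sigma^{-n}\omega$ at which the cocycle is evaluated moves with $n$, so the a.e.\ statement (\ref{lyapunov}) cannot be applied directly. I would resolve it by observing that $\tilde a_n(\omega):=\log\|g^n_{\sigma^{-n}\omega}\|=a_n(\sigma^{-n}\omega)$ is itself subadditive with respect to the (ergodic, $\nu$-preserving) inverse shift $\sigma^{-1}$: factoring $g^{n+m}_{\sigma^{-(n+m)}\omega}=g^n_{\sigma^{-n}\omega}\circ g^m_{\sigma^{-m}(\sigma^{-n}\omega)}$ and using submultiplicativity of $\|\cdot\|$ gives $\tilde a_{n+m}(\omega)\le\tilde a_n(\omega)+\tilde a_m(\sigma^{-n}\omega)$. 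Kingman's theorem applied to $(\sigma^{-1},\nu)$ then yields $\frac1n\tilde a_n(\omega)\to\inf_n\frac1n\int\tilde a_n\,d\nu=\inf_n\frac1n\int a_n\,d\nu=\lambda+\varepsilon<0$ for $\nu$-a.e.\ $\omega$, the middle equality holding by $\sigma$-invariance of $\nu$ and the last by (\ref{inf})--(\ref{lyapunov}). Hence $\|g^n_{\sigma^{-n}\omega}\|\to0$ exponentially on a full-measure set $\Omega$, the diameters collapse, and $\gamma_G$ is well defined on $\Omega$. Measurability of $\gamma_G$ follows since it is a pointwise limit of the maps $\omega\mapsto g^n_{\sigma^{-n}\omega}(x_0)$, which depend continuously on $\omega$ by (\ref{c2}); invariance $g_\omega(\gamma_G(\omega))=\gamma_G(\sigma\omega)$ follows by writing $g^n_{\sigma^{-(n-1)}\omega}=g_\omega\circ g^{n-1}_{\sigma^{-(n-1)}\omega}$ and passing to the limit using continuity of $g_\omega$.

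For attraction I would instead use the forward exponent: by invariance $\gamma_G(\sigma^n\omega)=g^n_\omega(\gamma_G(\omega))$, so for any $x\in I$,
\[
|\pi_x(G^n(\omega,x))-\gamma_G(\sigma^n\omega)|=|g^n_\omega(x)-g^n_\omega(\gamma_G(\omega))|\le\|g^n_\omega\|\to0
\]
on the full-measure set where (\ref{lyapunov}) holds. Finally I would set $\mu_G:=\Phi_*\nu$, the graph measure associated with $\Phi(\omega)=(\omega,\gamma_G(\omega))$. The identity $G\circ\Phi=\Phi\circ\sigma$ (invariance) shows that $\mu_G$ is $G$-invariant and that $\Phi$ is a measurable isomorphism of $(\sigma,\nu)$ onto $(G|_{\Gamma_G},\mu_G)$; since $(\sigma,\nu)$ is the two-sided Bernoulli shift, $(G,\Gamma_G,\mu_G)$ is Bernoulli, hence mixing and ergodic. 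The inclusion $\mathrm{supp}\,\mu_G\subseteq\overline{\Gamma_G}$ is immediate from $\mu_G(\Gamma_G)=1$; for the reverse I would combine the fact that $\nu$ charges every cylinder with the density of the full-measure graph in its closure, so that every relatively open piece of $\overline{\Gamma_G}$ receives positive mass. Beyond the moving-base cocycle, I expect this last support identification to be the delicate point when $\gamma_G$ is discontinuous (the bony case), where one must argue carefully that no relatively open subset of $\overline{\Gamma_G}$ is $\mu_G$-null.
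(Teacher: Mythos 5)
Your proposal is correct and follows essentially the same route as the paper: both construct $\gamma_G$ as the pullback limit of $g_{\sigma^{-1}\omega}\circ\cdots\circ g_{\sigma^{-n}\omega}$, transfer the negative fibre Lyapunov exponent to the backwards (moving-base) cocycle (you via Kingman applied to $\sigma^{-1}$, the paper via Furstenberg--Kesten and a telescoping Cauchy estimate), and take $\mu_G$ to be the pushforward of $\nu$ under $\omega\mapsto(\omega,\gamma_G(\omega))$ so that the Bernoulli and mixing properties lift from the base. Your explicit subadditivity check for $\log\|g^n_{\sigma^{-n}\omega}\|$ and your flagging of the support identification as the delicate point are, if anything, slightly more careful than the paper's own treatment of those steps.
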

\begin{proof}
By Lemma \ref{cor0}, each $G \in \mathcal{U}$ has negative $m$-fiber Lyapunov exponent.
By (\ref{lyapunov}), given $\varepsilon > 0$ there exists a measurable function $C : \Sigma_k \to \mathbb{R}^+$ such that for $\nu$ a.e. $\omega \in \Sigma_k$, we have
\begin{equation}\label{3.7}
 \| g^n(\omega,.)\| < C(\omega)e^{(\lambda + \varepsilon)n}, \ \textnormal{for} \ \textnormal{all} \ n> 0.
\end{equation}
Since the Bernoulli shift $\sigma$ is ergodic and invertible hence
$\sigma^{-1}$ is ergodic with respect to $\nu$ and has the same spectrum of Lyapunov exponents by Furstenberg-Kesten Theorem \cite{FK}. Thus if we define
$$h_n(\omega,x):=g^n(\sigma^{-n}\omega,x)$$
then by (\ref{lyapunov})
$$  \lim_{n \to \infty}\frac{1}{n}\textnormal{log} \| h_n(\omega,.)\|=\lambda + \varepsilon < 0, \ \textnormal{for} \ \nu \ a.e. \ \omega \in \Sigma_k.$$
Hence there exists $\ell(\omega)$ such that
$$\| h_n(\omega,.)\|<e^{n(\lambda + \varepsilon)} \ \ \forall n\geq \ell(\omega).$$
Thus given $\varepsilon > 0$ there exists a measurable function $C : \Sigma_k \to \mathbb{R}^+$ such that for $\nu$ a.e. $\omega \in \Sigma_k$, we have
\begin{equation}\label{3.7}
 \| h_n(\omega,.)\| < C(\omega)e^{(\lambda + \varepsilon)n}, \ \textnormal{for} \ \textnormal{all} \ n> 0.
 \end{equation}
Applying the approach used in the proof of \cite[Pro.~2.3]{BHN}, we conclude that the sequence $\{h_\ell(\omega,x)\}$ is a
Cauchy sequence for every $x \in I$ and a.e. $\omega \in \Sigma_k$.
Indeed, let $$\alpha(x):=\sup_{\omega \in \Sigma_k}|x-g(\omega,x)|$$
and note that for $x$ fixed $\alpha(x)$ is finite as $\Sigma_k$ is compact and $g$ is continuous. Given any $\varepsilon^{\prime}>0$, choose $\ell^{\ast}(\omega)$ sufficiently large that
$$\alpha(x)C(\omega)\sum_{j=\ell^{\ast}(\omega)}^\infty e^{j(\lambda + \varepsilon)}< \varepsilon^{\prime}. $$
Then if $m >\ell>\ell^{\ast}(\omega)$
$$|h_m(\omega,x)-h_\ell(\omega,x)| \leq \alpha(x)\sum_{j=\ell}^\infty \|h_j(\omega,.) \| \leq \alpha(x)c(\omega)\sum_{j=\ell}^\infty e^{j(\lambda + \varepsilon)}< \varepsilon^{\prime}.$$
To see this note that
$$|h_m(\omega,x)-h_\ell(\omega,x)|=|h_m(\omega,x)-h_{m-1}(\omega,x)+ \ldots +h_{\ell +1}(\omega,x)-h_\ell(\omega,x)|.$$
Note that applying $G$ once to $(\sigma^{-k}(\omega),x)$, gives $G(\sigma^{-k}(\omega),x)=(\sigma^{-(k-1)}(\omega),g(\sigma^{-k}(\omega),x))$.
Thus, $h_k(\omega,x)-h_{k-1}(\omega,x)=h_{k-1}(\omega,g(\sigma^{-k}(\omega),x)))- h_{k-1}(\omega,x)$.
As a result, $$|h_{k-1}(\omega,x)-h_{k}(\omega,x)| \leq \|h_{k-1}(\omega,.) \| |x- g(\sigma^{-k}(\omega),x))|. $$ Hence
\begin{quote}
$|h_m(\omega,x)-h_\ell(\omega,x)| \leq |h_m(\omega,x)-h_{m-1}(\omega,x)|+ \ldots +|h_{\ell +1}(\omega,x)-h_\ell(\omega,x)|$
\end{quote}
\begin{quote}
 $=\sum_{j=\ell+1}^m | h_j(\omega,x)-h_{j-1}(\omega,x)|$
\end{quote}
\begin{quote}
$\leq \sum_{j=\ell+1}^m \| h_{j-1}(\omega,.) \| |x-g(\sigma^{-j}(\omega),x))|$
\end{quote}
\begin{quote}
$\leq \sum_{j=\ell+1}^\infty \| h_{j-1}(\omega,.) \| \alpha(x).$
\end{quote}
Thus
\begin{quote}
$|h_m(\omega,x)-h_\ell(\omega,x)| \leq  \alpha(x) \sum_{j=\ell+1}^\infty \| h_{j-1}(\omega,.) \|$
\end{quote}
\begin{quote}
 $=\alpha(x)C(\omega)\sum_{j=\ell+1}^\infty e^{(j-1)(\lambda + \varepsilon)}< \varepsilon^{\prime}$
\end{quote}
as $\ell > \ell^{\ast}(\omega)$. Thus there exists a subset $\Omega \subseteq \Sigma_k$, with $\nu(\Omega)=1$, so that for each $\omega \in \Omega$ the sequence $\{h_m(\omega,x)\}$ is a Cauchy sequence for every $x \in I$.
Define
\begin{equation}\label{graph}
\gamma_G: \Omega \to I, \ \gamma_G(\omega):= \lim_{n \to + \infty}h_n(\omega,0).
\end{equation}
Since $$G(\omega, h_\ell(\omega,0))=(\sigma \omega,h_{\ell +1}(\sigma \omega,0)), $$
we see that
$$G(\omega, \gamma_G(\omega))=(\sigma \omega,\gamma_G(\sigma \omega) ) $$
and hence $\Gamma_G$, the graph of $\gamma_G$ is invariant under $G$.
Furthermore, by construction, for every $\omega \in \Omega$, one has
\begin{equation}\label{lim}
\lim_{n \to +\infty}|g^n(\omega,x)-g^n(\omega,\gamma_G(\omega))|=\lim_{n \to +\infty}|g^n(\omega,x)-g^n(\omega,0)|=0.
\end{equation}
This is because
$$|g^n(\omega,x)-g^n(\omega,0)| \leq \|g^n(\omega,.)\| |x|$$
and $\|g^n(\omega,.)\| \to 0$ as $n \to +\infty$.

Therefore, for every $\omega \in \Omega$,
\begin{equation}\label{almost}
 \lim_{n \to + \infty}g_{\sigma^{-1}\omega} \circ \ldots \circ g_{\sigma^{-n}\omega}(I)= \lim_{n \to + \infty}g_{\sigma^{-1}\omega} \circ \ldots \circ g_{\sigma^{-n}\omega}(0)=\gamma_G(\omega).
\end{equation}
Hence, $\gamma_G$ induces an invariant graph for $G$ which is an attracting set by (\ref{almost}).

Consider the projection $p_G:\Gamma_G \to \Sigma_k$, $p_G(\omega,\gamma_G(\omega))=\omega$, which is an isomorphism onto its image and the measure
\begin{equation}\label{measure}
 \mu_G=(p_G)_\ast \nu=\nu \circ (id \times \gamma_G)^{-1}.
\end{equation}
Then the mixing properties of the base transformation $(\sigma, \Sigma_k, \nu)$ lift to the transformation $(G, \Gamma_G\Gamma, \mu_G)$. In particular, $\mu_G$ is Bernoulli which implies that it is mixing. Every system that is mixing is also ergodic. Hence $\mu_G$ is an ergodic measure.
\end{proof}
We now point out that the previous proposition with together the next two results establish assertions $(1)$ and $(2)$ of the main result of this article, Theorem A.
\begin{proposition}\label{graph1}
For each skew product $G \in \mathcal{U}$ the maximal attractor $A_{max}(G)$ is either a continuous invariant graph or a bony attractor.
\end{proposition}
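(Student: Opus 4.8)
The plan is to analyze $A_{max}(G)$ one fiber at a time and read off from the fibers both the graph part and the bones. For fixed $\omega$ the fiber of $G^n(\Sigma_k\times I)$ over $\omega$ is $g^n(\sigma^{-n}\omega,I)=h_n(\omega,I)$, so that
$$A_{max}(G)_\omega:=\{x: (\omega,x)\in A_{max}(G)\}=\bigcap_{n\ge 0} h_n(\omega,I).$$
Since for $\mathcal U$ small each $g_\omega$ is an orientation-preserving diffeomorphism carrying $I$ strictly into itself (conditions (a)--(c) persist under small perturbation), the sets $h_n(\omega,I)=[h_n(\omega,0),h_n(\omega,1)]$ are nested closed intervals, whence $A_{max}(G)_\omega=[\gamma^-(\omega),\gamma^+(\omega)]$ is a (possibly degenerate) closed interval with $\gamma^-(\omega):=\lim_{n}h_n(\omega,0)$ and $\gamma^+(\omega):=\lim_{n}h_n(\omega,1)$. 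I would first record that each $\omega\mapsto h_n(\omega,0)$ and $\omega\mapsto h_n(\omega,1)$ is continuous (a composition of the continuous fiber map with the shift), the former increasing to $\gamma^-$ and the latter decreasing to $\gamma^+$; hence $\gamma^-$ is lower semicontinuous, $\gamma^+$ is upper semicontinuous, and $\gamma^-\le\gamma^+$ everywhere. On the full-measure set $\Omega$ of Proposition \ref{thm000} the estimate $\|h_n(\omega,\cdot)\|\le C(\omega)e^{(\lambda+\varepsilon)n}\to 0$ forces $\mathrm{diam}\,h_n(\omega,I)\to 0$, so the fiber collapses to the single point $\gamma_G(\omega)$; thus $\gamma^-=\gamma^+=\gamma_G$ on $\Omega$, and $D:=\{\omega:\gamma^-(\omega)=\gamma^+(\omega)\}\supseteq\Omega$ has full measure.

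Next I would upgrade mere measurability to continuity of the graph part. At any $\omega_0\in D$ one has $\gamma^-(\omega_0)=\gamma^+(\omega_0)$, and combining the two semicontinuities with the sandwich $\gamma^-\le\gamma^+$ gives
$$\gamma^-(\omega_0)\le\liminf_{\omega\to\omega_0}\gamma^-(\omega)\le\liminf_{\omega\to\omega_0}\gamma^+(\omega)\le\limsup_{\omega\to\omega_0}\gamma^+(\omega)\le\gamma^+(\omega_0)=\gamma^-(\omega_0),$$
so all these quantities coincide and both $\gamma^+$ and $\gamma^-$ are continuous at every point of $D$. Consequently the common value $\gamma_G:=\gamma^-=\gamma^+$ is a continuous function on the full-measure set $D$, whose graph is precisely the invariant graph $\Gamma_G$ produced in Proposition \ref{thm000}; this supplies the ``continuous function on a set of full measure'' demanded by Definition \ref{bony}.

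The dichotomy is then read off from $D$. If $D=\Sigma_k$, then $\gamma^-=\gamma^+$ is simultaneously lower and upper semicontinuous on all of $\Sigma_k$, hence continuous, and $A_{max}(G)$ is the graph of this continuous $G$-invariant function, i.e.\ a continuous invariant graph. If $D\ne\Sigma_k$, then for each $\omega_0\notin D$ the nondegenerate fiber $\{\omega_0\}\times[\gamma^-(\omega_0),\gamma^+(\omega_0)]$ is a vertical closed interval, and $A_{max}(G)$ is the union of the continuous graph over $D$ with these bones. To conclude that $A_{max}(G)$ is a bony attractor in the sense of Definition \ref{bony} it remains to show that every such bone lies in $\mathrm{Cl}(\Gamma_G)$; equivalently, since $\Gamma_G\subseteq A_{max}(G)$ and $A_{max}(G)$ is closed, that $A_{max}(G)=\mathrm{Cl}(\Gamma_G)=\mathrm{supp}\,\mu_G$.

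This last identification is the main obstacle. Because $\Omega$ is dense (it has full measure in the perfect space $\Sigma_k$), semicontinuity alone only places the limit values of $\gamma_G$ along sequences $\omega_j\to\omega_0$ inside the bone $[\gamma^-(\omega_0),\gamma^+(\omega_0)]$; it does not show that every point of the bone is so approximated. The mechanism that fills the bones is the covering property (f): since $\mathrm{Cl}(B)\subseteq f_0(B)\cup f_1(B)$ with $\|Df_i\|<1$ on $B=(x_0,x_1)$, one can, for any target $y\in[\gamma^-(\omega_0),\gamma^+(\omega_0)]$, prescribe a backward itinerary in the symbols $0,1$ realizing $y$ as a limit of fiber values along points $\omega_j\in\Omega$ with $\omega_j\to\omega_0$, so that $(\omega_0,y)\in\mathrm{Cl}(\Gamma_G)$. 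Carrying this blender-type coding through for general $G$ close to $F$ requires controlling the dependence of $g_\omega$ on the whole sequence $\omega$, which is exactly what the Lipschitz condition (\ref{c2}) guarantees; this step, rather than the fiber analysis above, is where the bulk of the work lies and completes the identification of $A_{max}(G)$ as a bony attractor.
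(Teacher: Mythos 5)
Your fiber-by-fiber analysis of $A_{max}(G)$ is correct and is essentially the paper's argument in a cleaner form: the paper works with upper semicontinuity of the set-valued map $\omega\mapsto A_\omega=\bigcap_n I(\omega,n)$, while you work with lower/upper semicontinuity of the endpoint functions $\gamma^-,\gamma^+$; the sandwich argument giving continuity of the graph part on $D=\{\gamma^-=\gamma^+\}$ and the resulting dichotomy match the paper's Proposition \ref{graph1}. The genuine gap is the step you yourself flag as ``the main obstacle'': showing that each bone $\{\omega_0\}\times[\gamma^-(\omega_0),\gamma^+(\omega_0)]$ lies in $\mathrm{Cl}(\Gamma_G)$. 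You do not prove this, and the mechanism you point to --- the covering property (f) --- is not the right tool. Property (f) only concerns the fixed interval $B\subset[p_0,p_1]$ and is used in the paper for a different purpose (Lemma \ref{lem11}, thickness of the attractor): it produces, for $y\in B$, a backward itinerary in $\{0,1\}$ with $y=\lim_n f_{\omega_{-1}}\circ\cdots\circ f_{\omega_{-n}}(I)$, but that itinerary is dictated from position $-1$ backwards by the covering construction, so the resulting sequences cannot simultaneously be forced to agree with a prescribed $\omega_0$ on a central block $[-N,N]$; hence it does not place $(\omega_0,y)$ in $\mathrm{Cl}(\Gamma_G)$, and it says nothing about targets $y$ outside $B$.

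The paper's actual argument (Lemma \ref{geometry}) is different. First it identifies where bones can occur: since $f_1,\dots,f_{k-1}$ are uniform contractions and this persists under small $C^2$ perturbation via (\ref{c2}), every $\omega$ whose left tail is not eventually $0$ lies in $\Omega$, so a nondegenerate fiber forces $\omega$ to have a left tail of $0$'s. Then, given $(\omega,x)\in A_{max}(G)$ with such an $\omega$ and a neighborhood $U_N(\omega)\times V_\varepsilon(x)$, one pulls $(\omega,x)$ back along a long block, replaces the distant $0$-tail by a $1$-tail to obtain $\widetilde\omega\in\Omega$ agreeing with $\omega$ on $[-N,N]$, and uses the uniform contraction along the $1$-tail together with the near-identity behaviour of the long block of $g_{\sigma^{-j}\omega}$'s to show $\gamma_G(\widetilde\omega)\in V_\varepsilon(x)$. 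Without this tail-replacement argument (or some substitute), your proof establishes only that $A_{max}(G)$ is a continuous graph over $D$ union a family of vertical intervals, not that it is a bony attractor in the sense of Definition \ref{bony}.
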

\begin{proof}
Take a skew product $G \in \mathcal{U}$ with $G(\omega,x)=(\sigma \omega, g(\omega,x))=(\sigma \omega, g_\omega(x))$. By the previous proposition there exists a measurable function $\gamma_G:\Omega \subseteq \Sigma_k \to I$, with $\nu(\Omega)=1$, such that $\Gamma_G$ the graph of $\gamma_G$ is invariant under $G$.
We claim that $\Gamma_G\subset A_{max}(G).$

Indeed, since $A_\omega:=A_{max}(G) \cap I_\omega=\bigcap_{n\geq 0} I(\omega,n), \ \textnormal{where} \ I(\omega,n):= g_{\sigma^{-1}\omega}\circ \ldots \circ g_{\sigma^{-n}\omega}(I)$ and $I_\omega:=\{\omega\}\times I$, and by using (\ref{almost}), one has
$$\lim_{n \to +\infty} g_{\sigma^{-1}(\omega)}\circ \dots \circ g_{\sigma^{-n}(\omega)}(I)=\lim_{n \to +\infty} g_{\sigma^{-1}(\omega)}\circ \dots \circ g_{\sigma^{-n}(\omega)}(0)=\gamma_G(\omega), $$ for each $\omega \in \Omega$, hence we observe that
$\Gamma_G\subset A_{max}(G)$, as claimed.

Note that $I(\omega,n)$ is a sequence of nested intervals, and thus $A_\omega=A_{max}(G) \cap I_\omega$ is either an interval or a
single point.
Also note that if some sequences $\omega$ and $\omega^{\prime}$ are close enough to each other,
say, $$\omega_{-n}^{\prime}=\omega_{-n}, \ldots, \omega_{-1}^{\prime}=\omega_{-1}$$
then, using $I(\omega^{\prime},n)\supset A_{\omega^{\prime}}$, we deduce $I(\omega,n)\supset A_{\omega^{\prime}}.$
This implies the upper-semicontinuity of $A_\omega$. This semicontinuity,
 will immediately imply the continuity of its graph part.

Now there are two possibilities:
either $\Omega=\Sigma_k$ and hence $A_{max}(G)$ is a continuous invariant graph, or the bones exist.
In the later case, to verify that $A_{max}(G)$ is actually a bony attractor, it is enough
to show that the set of bones contained in the closure of the graph. This will be done in the following lemma which completes the proof of the proposition.
\end{proof}
\begin{lemma}\label{geometry}
Let $G \in \mathcal{U}$ be a small perturbation of the skew product $F$ given by (\ref{e66}) such that its maximal attractor $A_{max}(G)$ contains the bones with a graph function $\gamma_G$ defined on a full measure subset $\Omega \subset \Sigma_k$. Then
the bones are contained in the closure of the graph $\Gamma_G$.
\end{lemma}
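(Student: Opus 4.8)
The plan is to show that every bone lies in the closure of the graph $\Gamma_G$ by producing, for each bone, a sequence of base points at which the graph is defined and whose fibers accumulate onto the entire bone. Recall that for each $\omega$ the fiber slice $A_\omega = \bigcap_{n \geq 0} I(\omega,n)$ is a nested intersection of intervals, hence either a point (in which case $\omega \in \Omega$ and $(\omega, \gamma_G(\omega))$ is a graph point) or a nondegenerate closed interval, which is precisely a bone. So I fix a bone $A_{\omega^{*}} = [a, b]$ with $a < b$ over a base point $\omega^{*}$, and I must exhibit graph points converging to every point of $[a,b]$.

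First I would exploit the upper-semicontinuity established in Proposition \ref{graph1}: if $\omega$ and $\omega'$ agree in the coordinates $\omega_{-n}, \ldots, \omega_{-1}$, then $I(\omega, n) \supset A_{\omega'}$. The key structural input is the covering property (f) together with the contraction-on-average and negative Lyapunov exponent conclusions. Since $\nu(\Omega) = 1$, the graph is defined on a dense set of base sequences; by varying the positive-index coordinates $\omega_0, \omega_1, \ldots$ while freezing a long common negative block shared with $\omega^{*}$, I can select $\omega^{(j)} \in \Omega$ whose first $n_j$ past coordinates match those of $\omega^{*}$, with $n_j \to \infty$. For such $\omega^{(j)}$, the point $\gamma_G(\omega^{(j)})$ lies in $A_{\omega^{(j)}} \subset I(\omega^{(j)}, n_j) = I(\omega^{*}, n_j)$, and $I(\omega^{*}, n_j) \downarrow A_{\omega^{*}} = [a,b]$. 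This forces the limit points of $\gamma_G(\omega^{(j)})$ to lie in $[a,b]$, and in the base $\omega^{(j)} \to \omega^{*}$.

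The heart of the argument — and the main obstacle — is to show that the accumulation fills the \emph{whole} bone $[a,b]$, not merely one point of it. For this I would use the covering property (f): the maps $f_0, f_1$ (and their perturbations $g_0, g_1$) satisfy $\mathrm{Cl}(B) \subset g_0(B) \cup g_1(B)$ on the relevant interval, so that forward images of $B$ under admissible compositions cover $B$ in an expansive, symbolic-coding fashion. Reading this backward, the branch structure indexed by the free positive coordinates $(\omega_0, \omega_1, \ldots)$ distributes the fiber values $\gamma_G(\omega^{(j)})$ densely across the limiting interval: distinct admissible continuations of the frozen past block land the graph value near prescribed targets throughout $[a,b]$. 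Concretely, given any target $y \in [a,b]$, I would choose the free coordinates of $\omega^{(j)}$ so that the corresponding composition steers the orbit toward $y$, using surjectivity of the covering to hit any prescribed subinterval and the uniform contraction on $B$ from (f) to control the fiber diameter. Thus for each $y \in [a,b]$ I obtain $\omega^{(j)} \in \Omega$ with $\omega^{(j)} \to \omega^{*}$ and $\gamma_G(\omega^{(j)}) \to y$, witnessing $(\omega^{*}, y) \in \mathrm{Cl}(\Gamma_G)$.

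Finally I would assemble these facts: since $y \in [a,b]$ was arbitrary and the bone was an arbitrary nondegenerate slice $A_{\omega^{*}}$, every bone is contained in $\mathrm{Cl}(\Gamma_G)$, which is exactly the claim. The technical care needed is to verify that the coordinate selections can be made \emph{within} the full-measure set $\Omega$ (genericity of $\Omega$ plus the density of admissible symbolic words suffices) and that the covering property survives $C^2$-small perturbation, which is built into the open condition defining $\mathcal{U}$ in Lemma \ref{cor0}; the delicate point remains quantifying how finely the free-coordinate branches resolve the target interval, and I expect this surjectivity-of-coding step to be where the bulk of the estimates concentrate.
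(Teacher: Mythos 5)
Your setup---identifying a bone as a nondegenerate nested intersection $A_{\omega^{*}}=\bigcap_{n}I(\omega^{*},n)$ and approximating its points by graph points over sequences sharing a long past block with $\omega^{*}$---is the right frame, and the first half of your argument (every limit point of $\gamma_G(\omega^{(j)})$ lands in $[a,b]$) is sound, modulo the caveat that for a general (non-step) skew product $I(\omega^{(j)},n_j)$ only approximates $I(\omega^{*},n_j)$ via the Lipschitz condition (\ref{c2}) rather than equalling it. The genuine gap is in the step you yourself flag as the heart of the matter: your mechanism for hitting an arbitrary target $y\in[a,b]$ is to vary the \emph{future} coordinates $\omega_0,\omega_1,\dots$. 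But $\gamma_G(\omega)=\lim_{n}g_{\sigma^{-1}\omega}\circ\cdots\circ g_{\sigma^{-n}\omega}(0)$ is determined by the backward compositions, i.e.\ by the negative coordinates; the future enters only through the $C\,d(\omega,\omega^{\prime})$ perturbation in (\ref{c2}), so varying it moves $\gamma_G$ by an arbitrarily small amount and cannot distribute graph values across the bone. The covering property (f) does not rescue this: it concerns only the interval $B\subset[p_0,p_1]$ and is the tool for the thickness statement (Lemma \ref{lem11}), not for this lemma; to make your scheme work you would need density of attainable tail values in the pullback of $[a,b]$ under the frozen past block, which (f) does not provide.

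The paper's proof rests on a structural fact you never invoke and which makes any surjectivity-of-coding estimate unnecessary: since $f_1,\dots,f_{k-1}$ are uniform contractions and $G$ is $C^2$-close to $F$, a nondegenerate slice $A_\omega$ can occur only when $\omega$ has a left tail of $0$'s. Given $x$ in such a bone, one pulls $x$ back $n_0+m$ steps to $x^{\prime}=h(x)$ along the past of $\omega$, then replaces the infinite tail of $0$'s in the deep past by a tail of $1$'s preceded by a long buffer block of $0$'s. The tail of $1$'s collapses $I$ to a tiny interval $I_\delta$ and thereby forces the new sequence $\widetilde\omega$ into $\Omega$; the buffer of near-identity $0$-maps keeps $I_\delta$ inside $h(V_\varepsilon(x))$; and pushing forward again lands $\gamma_G(\widetilde\omega)$ in $V_\varepsilon(x)$ while $\widetilde\omega$ stays in the cylinder $U_N(\omega)$. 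This reaches \emph{every} point $x$ of the bone directly: the steering happens in the deep past, not in the future, and the target is hit because $x$ itself supplies the preimages $h(x)$ under the past compositions. You should restructure the second half of your argument around this mechanism.
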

\begin{proof}
To prove the lemma it is enough to show that the maximal attractor $A_{max}(G)$ coincides with the
closure of the intersection $A_{max}(G) \cap (\Omega \times I)$.

First, we notice that the fiber maps $f_i$, $i=1, \ldots, k-1$, of $F$ are uniformly contracting maps, by condition $(c)$, and the skew product $G$ is $C^2$-close to $F$,
hence, by $(\ref{c2})$, every sequence $\omega \in \Sigma_k$ without a tail of $0$'s to the left
 belong to $\Omega$.
Assume $(\omega,x)\in A_{max}(G)$ with $\omega \in \Sigma_k \setminus \Omega$. Then the sequence $\omega$ has a tail of $0$'s to the left
(i.e. there exists $n_0 \in \mathbb{N}$ so that for each $n >n_0$, one has $\omega_{-n}=0$).
We denote the set of sequences $\omega^{\prime}$ such that 
$\omega_i=\omega_i^{\prime}$ for $i \in [-N,N]$ by
$U_N(\omega)$ and the $\varepsilon$-neighborhood of the point $x$ by $V_\varepsilon(x)$. Take $n> n_0> N$ with $n=n_0+2m$ for large enough $m$ and $h=g^{-1}_{\sigma^{-(n_0+m)}\omega}\circ \ldots \circ g^{-1}_{\sigma^{-1}\omega}$.
Note that $\sigma^{-(n_0+m)}\omega$ has the following form
$$ \sigma^{-(n_0+m)}\omega=(\ldots, 0, \ldots,0;\underbrace{0, \ldots,0}_{m-times}, \omega_{-n_0}, \ldots, \omega_{-1}, \omega_0, \omega_1, \omega_0, \ldots).$$
Then the point $(\omega^{\prime},x^{\prime})=(\sigma^{-(n_0+m)}(\omega),h(x))\in A_{max}(G)$.
Now we take the sequence $$\widetilde{\omega}=(\ldots, 1, 1, \underbrace{0, \ldots, 0}_{2m-times}, \omega_{-n_0}, \ldots, \omega_{-1}; \omega_0, \omega_1, \ldots)$$
which has a tail of $1$'s to the left. Since $f_1$ is a uniformly contracting map, $G$ is $C^2$-close to $F$ and by $(\ref{c2})$, we conclude that
$\text{diam}(g_{\widetilde{\omega}_{-n}}, \ldots, g_{\widetilde{\omega}_{-1}}(I))\to 0$ whenever $n \to +\infty$. Thus $\widetilde{\omega}\in \Omega$.
Moreover, for $0 < \delta < \varepsilon <1$ small and large enough $k$, one has $\text{diam}(g_{\sigma^{-n_0-2m-k}\widetilde{\omega}}\circ \ldots \circ g_{\sigma^{-n_0-2m-2k}\widetilde{\omega}}(I))<\delta < \varepsilon$. Let us take $I_\delta=g_{\sigma^{-n_0-2m-k}\widetilde{\omega}}\circ \ldots \circ g_{\sigma^{-n_0-2m-2k}\widetilde{\omega}}(I)$. Then for large enough $m$, we have
$$g_{\sigma^{-n_0-m-1}\widetilde{\omega}}\circ \ldots \circ g_{\sigma^{-n_0-2m}\widetilde{\omega}}(I_\delta)\subset h(V_\varepsilon(x)).$$
Thus, the pair $(\widetilde{\omega}, \gamma_G(\widetilde{\omega}))$ belongs to the intersection $A_{max}(G) \cap (\Omega \times I)\cap (U_N(\omega)\times V_\varepsilon(x))$ and hence the conclusion of the lemma holds.
\end{proof}
The next result ensures that the subset of all skew products $G \in \mathcal{U}$ having a bony attractor is nonempty.
\begin{lemma}\label{graph2}
There exists a small perturbation $G \in \mathcal{U}$ of $F$ which admits a bony graph attractor in the sense of Definition \ref{bony}. In particular, the subset of bones has the cardinality of the continuum and is dense in the attractor.
\end{lemma}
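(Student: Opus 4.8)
The plan is to exhibit one explicit perturbation $G\in\mathcal{U}$ whose maximal attractor has at least one nondegenerate fiber, and then to extract the cardinality and density statements from the fiber structure already established in Proposition \ref{thm000}, Proposition \ref{graph1} and Lemma \ref{geometry}. The entire difficulty is to defeat the contraction on a genuinely nondegenerate interval while keeping the perturbation $C^2$-small, so that $G$ stays in $\mathcal{U}$ (and thus keeps a negative fiber Lyapunov exponent by Lemma \ref{cor0}). The key observation is that condition (a) makes $f_0$ $C^2$-close to the identity and condition (b) makes $p_0$ a \emph{neutral} fixed point; this leaves exactly enough room near $p_0$ to replace $f_0$ by the identity on a small interval at an arbitrarily small $C^2$-cost.

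First I would construct $G$. Fix a small closed interval $[a,b]$ with $p_0\in[a,b]$ and $b<x_0$, so that $[a,b]$ is disjoint from $B=(x_0,x_1)$. I modify only the fiber map over the symbol $0$, replacing $f_0$ by a $C^2$ diffeomorphism $g_0$ that equals the identity on $[a,b]$, equals $f_0$ outside a slightly larger interval, and still maps $I$ strictly into itself. Since $\sup_I|f_0-\mathrm{id}|$, $\sup_I|Df_0-1|$ and $\sup_I|D^2f_0|$ are all small by condition (a), the increment $g_0-f_0$ has arbitrarily small $C^2$-norm once $[a,b]$ is taken close enough to $p_0$; keeping every other fiber map equal to that of $F$, the resulting step skew product $G$ lies in $\mathcal{U}$. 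By construction $g_0=\mathrm{id}$ on $[a,b]$, hence $[a,b]$ is $g_0$-invariant and $[a,b]\subseteq g_0^{\ell}(I)$ for all $\ell$; moreover $g_0=f_0$ on a neighbourhood of $\mathrm{Cl}(B)$, so the covering property (f) and the contraction on $B$ persist verbatim for $G$, and Proposition \ref{graph1} and Lemma \ref{geometry} apply.

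Next I would locate the bones. For any $\omega$ with a tail of $0$'s to the left, say $\omega_{-j}=0$ for all $j>m$, the defining nested intervals satisfy
$$
A_\omega=\bigcap_{n\ge 0} g_{\omega_{-1}}\circ\cdots\circ g_{\omega_{-n}}(I)
= g_{\omega_{-1}}\circ\cdots\circ g_{\omega_{-m}}\Bigl(\bigcap_{\ell\ge 0} g_0^{\ell}(I)\Bigr)
\supseteq g_{\omega_{-1}}\circ\cdots\circ g_{\omega_{-m}}([a,b]),
$$
which is a nondegenerate interval because the $g_{\omega_{-i}}$ are diffeomorphisms and $[a,b]$ is $g_0$-invariant. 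Hence every such $\omega$ lies in $\Sigma_k\setminus\Omega$ and carries a bone, so $\Omega\neq\Sigma_k$ and, by Proposition \ref{graph1} together with Lemma \ref{geometry}, $A_{max}(G)$ is a genuine bony graph attractor. The sequences with a tail of $0$'s to the left form a set of cardinality the continuum (the nonnegative coordinates are free) and are dense in $\Sigma_k$ (any finite central block can be completed by $0$'s to the left), giving continuum many bones over a dense set of base points. Density of the bones in the whole attractor then follows from upper-semicontinuity: given a graph point $(\omega,\gamma_G(\omega))$ with $\omega\in\Omega$, choose bone sequences $\omega^{(n)}\to\omega$ agreeing with $\omega$ on ever longer central blocks; by the upper-semicontinuity of $\omega\mapsto A_\omega$ proved in Proposition \ref{graph1}, any accumulation point of points $(\omega^{(n)},y_n)$, $y_n\in A_{\omega^{(n)}}$, lies in $\{\omega\}\times A_\omega=\{(\omega,\gamma_G(\omega))\}$, and one exists by compactness of $A_{max}(G)$. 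Since $A_{max}(G)$ is the closure of the graph (Lemma \ref{geometry}), the bones are dense in it.

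The main obstacle is the simultaneous control of two competing demands: the modification must be $C^2$-small so that $G$ remains in $\mathcal{U}$, retains a negative fiber Lyapunov exponent, and preserves the covering property (f); yet it must produce an honestly invariant nondegenerate interval on which contraction fails. This is possible only because conditions (a) and (b) force $f_0$ to be $C^2$-close to the identity near the neutral fixed point $p_0$, so that flattening it to the identity on a tiny interval is $C^2$-cheap while creating a persistent bone; it is precisely this interplay of non-uniform (neutral) contraction with the covering mechanism of Lemma \ref{geometry} that yields the continuum of bones dense in the attractor.
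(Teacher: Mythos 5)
Your proposal is correct and follows essentially the same route as the paper: both flatten the dynamics over the symbol $0$ to the identity on a small interval around the neutral fixed point $p_0$ (using conditions (a)--(b) to keep the perturbation $C^2$-small), obtain a nondegenerate invariant interval, and hence bones over every sequence with a left tail of $0$'s, from which the continuum cardinality and density follow. The only cosmetic difference is that you realize this as a step perturbation of $f_0$ itself, while the paper modifies the fiber map $g_\omega$ only for $\omega$ near the all-zeros sequence; your write-up is in fact somewhat more careful about preserving the covering property and about justifying density via upper semicontinuity.
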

\begin{proof}
Consider the fiber map $f_0$ satisfies conditions ($a$) and ($b$) in the beginning of Section 2 with a weak attracting fixed point $p_0$, and take a map $g$, $C^2$-close to $f_0$, such that $g=id$ on a small neighborhood $U$ of the point $p_0$.
Now take a small perturbation $G$ of $F$ so that for the sequence $\omega=(\ldots, 0, 0, 0, \ldots)\in \Sigma_k$ one has $g_\omega=g$.
As you have seen before, for $I_\omega=\{\omega\}\times I$,
and $I(\omega,n)=g_{\sigma^{-1}(\omega)}\circ \dots \circ g_{\sigma^{-n}(\omega)}(I)$,
one has
$A_{max}(G) \bigcap I_\omega = \bigcap_{n\geq 0} I(\omega,n).$
Thus, we get $$I(\omega,n)=g_{\sigma^{-1}(\omega)}\circ \dots \circ g_{\sigma^{-n}(\omega)}(I)=\underbrace{g \circ \ldots \circ g}_{(n)-times}(I)=g^{n}(I)$$
which ensures that $A_{max}(G) \bigcap I_\omega = \bigcap_{n\geq 0} I(\omega,n)$ is an interval, hence $A_{max}(G)$ is a bony attractor.
Moreover, for each sequence $\omega^{\prime} \in \Sigma_k$ of the form $\omega^{\prime}=(\ldots, 0, 0; \omega^{\prime}_1, \omega^{\prime}_2, \ldots)$, it is not hard to see that $A_{max}(G) \bigcap I_{\omega^{\prime}}$
is an interval.

Furthermore, by construction, for any finite word $\alpha$ of the alphabets $\{0, 1, \ldots, k-1\}$ and a sequence $\rho$ of the form $\rho=(\ldots, 0, 0, \alpha, 0, 0, \ldots)$ with $\alpha$ standing at the zero position, $A_{max}(G) \bigcap I_\rho$ contains an interval. Thus
the subset of bones has the cardinality of the continuum and is dense in the attractor $A_{max}(G)$.
\end{proof}
In what follows, we show that the maximal attractor $A_{max}(F)$ is thick, this means that the projection of $A_{max}(F)$ on the fiber has positive Lebesgue measure.
\begin{lemma}\label{lem11}
Consider the skew product $F$ given by (\ref{e66}). Then the maximal attractor $A_{max}(F)$ is thick.
\end{lemma}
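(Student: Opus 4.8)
The plan is, first, to pin down the meaning of \emph{thick}. By the definition recorded just above the statement, this means that the fiber projection $\pi_x(A_{max}(F))\subseteq I$ has positive Lebesgue measure. It is worth stressing at the outset that this is \emph{not} the assertion that $A_{max}(F)$ carries positive $\nu\times\mathrm{Leb}$ measure in $\Sigma_k\times I$: since the $m$-fiber Lyapunov exponent is negative, the same backward estimate (\ref{inf}) used in Proposition~\ref{thm000} gives $\|f_{\omega_{-1}}\circ\cdots\circ f_{\omega_{-n}}\|\to 0$ for $\nu$-a.e.\ $\omega$, so each fiber $A_\omega=A_{max}(F)\cap I_\omega$ shrinks to a single point and $A_{max}(F)$ is a $\nu\times\mathrm{Leb}$-null set. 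Thickness is therefore genuinely a statement about the \emph{projection}, not about the measure of the attractor in the product space; the goal is to show that $\pi_x(A_{max}(F))$ contains the nondegenerate interval $\text{Cl}(B)$ furnished by condition $(f)$.

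Second, I would recall from the discussion preceding Proposition~\ref{graph1} that, because $F$ is the step skew product (\ref{e66}), for every $\omega$ the fiber is the nested intersection $A_\omega=\bigcap_{n\ge 0}I(\omega,n)$ with $I(\omega,n)=f_{\omega_{-1}}\circ\cdots\circ f_{\omega_{-n}}(I)$, a quantity depending only on the backward coordinates $\omega_{-1},\dots,\omega_{-n}$. The strategy is to realise each prescribed $y\in\text{Cl}(B)$ as a point of some fiber $A_\omega$ by constructing a suitable backward itinerary in the two-symbol alphabet $\{0,1\}$ singled out by the covering hypothesis.

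Third, here is the construction. Fix $y\in\text{Cl}(B)$ and set $y_0:=y$. By $\text{Cl}(B)\subset f_0(B)\cup f_1(B)$ there is $i_0\in\{0,1\}$ with $y_0\in f_{i_0}(B)$; since each $f_i$ is an increasing $C^2$ diffeomorphism, the branch $y_1:=f_{i_0}^{-1}(y_0)$ is a well-defined point of the open set $B$. As $y_1\in B\subseteq\text{Cl}(B)$, the covering applies again and yields $i_1\in\{0,1\}$ and $y_2:=f_{i_1}^{-1}(y_1)\in B$; iterating produces symbols $i_0,i_1,\dots\in\{0,1\}$ and points $y_n\in B$ with $y=f_{i_0}\circ f_{i_1}\circ\cdots\circ f_{i_{n-1}}(y_n)$ for every $n$. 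Choosing any $\omega\in\Sigma_k$ with $\omega_{-1}=i_0,\ \omega_{-2}=i_1,\dots,\omega_{-n}=i_{n-1},\dots$ (the forward coordinates being immaterial), we obtain $y\in f_{\omega_{-1}}\circ\cdots\circ f_{\omega_{-n}}(I)=I(\omega,n)$ for all $n$, hence $y\in A_\omega\subseteq\pi_x(A_{max}(F))$. Letting $y$ range over $\text{Cl}(B)$ gives $\text{Cl}(B)\subseteq\pi_x(A_{max}(F))$, so the projection has Lebesgue measure at least $x_1-x_0>0$, which is precisely thickness.

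Finally, the point to be careful about is that the inverse branches remain inside the open interval $B$ at every stage, so that the hypothesis $\text{Cl}(B)\subset f_0(B)\cup f_1(B)$ can be reapplied indefinitely and the itinerary never stalls; this is automatic since $f_i^{-1}(f_i(B))=B$ and $B\subseteq\text{Cl}(B)$. The condition $\|Df_i\|<1$ on $[x_0,x_1]$ for $i\in\{0,1\}$ plays a supporting role, guaranteeing that the branches are genuine contractions so the selected $y_n$ stay well within $B$ rather than escaping through an endpoint. Thus no serious analytic difficulty arises in the covering argument itself; the only genuine obstacle is the conceptual one flagged in the first paragraph, namely correctly identifying thickness with positive measure of the fiber projection rather than with positive $\nu\times\mathrm{Leb}$ measure of $A_{max}(F)$, the latter being precluded by negativity of the fiber Lyapunov exponent.
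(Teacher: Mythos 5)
Your proof is correct and takes essentially the same approach as the paper: both use the covering property $(f)$ to inductively construct a backward itinerary in $\{0,1\}$ realizing each point of $B$ inside the attractor (you phrase the induction via inverse branches $y_{n+1}=f_{i_n}^{-1}(y_n)\in B$, the paper equivalently via forward images $x\in f_{\omega_{-1}}\circ\cdots\circ f_{\omega_{-n}}(B)$), thereby showing the fiber projection of $A_{max}(F)$ contains an interval of positive length. The only cosmetic difference is that the paper routes the conclusion through the graph image $K=\gamma_F(\Sigma_k)$, whereas you argue directly from membership in $A_\omega=\bigcap_{n\geq 0}I(\omega,n)$, which avoids needing the diameters of the nested images to shrink.
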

\begin{proof}
First, we recall conditions $(a)-(f)$ in the beginning of this section. By Proposition \ref{thm000} and conditions $(b)$ and $(c)$, the maximal attractor $A_{max}(F)$ is a continuous invariant graph. Consider the graph function
$\gamma_F:\Sigma_k \to I$ and let $K=\gamma_F(\Sigma_k)$. We apply the covering property from condition $(f)$ and show that the interval $B$ with $B \subset J=[p_0,p_1]$ introduced by $(f)$ is contained in $K$.
By this fact, Remark \ref{rems}, the definition of measure $\mu_F$ given by (\ref{measure}) and by construction, the maximal attractor $A_{max}(F)$ is thick.
For that, we show that for each $x \in B$, there exists a sequence $(\omega_{-n})_{n \geq 1}$ of $\{0, 1\}$ so that
\begin{equation}\label{34}
  x=\lim_{n \to + \infty}f_{\omega_{-1}}\circ \ldots \circ f_{\omega_{-n}}(I).
\end{equation}
First, we define, inductively, a sequence $(\omega_{-n})_{n \geq 1}$ of $\{0, 1\}$ so that
\begin{equation}\label{334}
  x=\lim_{n \to + \infty}f_{\omega_{-1}}\circ \ldots \circ f_{\omega_{-n}}(B).
\end{equation}
Assume that we have found $\omega_{-1}, \ldots, \omega_{-n} \in \{0, 1\}$ so that $x \in f_{\omega_{-1}}\circ \ldots \circ f_{\omega_{-n}}(B)$. Then the covering property implies that
$$ x \in f_{\omega_{-1}}\circ \ldots \circ f_{\omega_{-n}}(B) \subset \bigcup_{i=0}^1 f_{\omega_{-1}}\circ \ldots \circ f_{\omega_{-n}} \circ f_i(B),$$
hence we can find $\omega_{-(n+1)}$ such that $x \in f_{\omega_{-1}}\circ \ldots \circ f_{\omega_{-n}}\circ f_{\omega_{-(n+1)}}(B).$
Take any sequence $\omega^{\prime}\in \Sigma_k$ so that for each $n \geq 1$, we have $\omega^{\prime}_{-n}=\omega_{-n}$.
Then it is easily seen that $$x=\lim_{n \to \infty}f_{\omega_{-1}}\circ \ldots \circ f_{\omega_{-n}}(B)=\lim_{n \to \infty}f_{\omega_{-1}}\circ \ldots \circ f_{\omega_{-n}}(I)=\lim_{n \to \infty}f_{\omega^{\prime}_{-1}}\circ \ldots \circ f_{\omega^{\prime}_{-n}}(I),$$ as we claimed.
\end{proof}
The next proposition is an analogue of \cite[Thm.~3.1]{BHN} to our setting. It asserts that the invariant measure for the perturbed system is
continuous in the Hutchinson metric.
\begin{proposition}\label{pro12}
Suppose $G \in \mathcal{U}$ with $G(\omega,x)=(\sigma\omega,g(\omega,x))$.
Then for given $\varepsilon >0$, by shrinking $\mathcal{U}$, for $\nu$ almost every $\omega\in \Sigma_k$ and all $x\in I$, one has that
$$d(F^n(\omega,x),G^n(\omega,x))<\varepsilon,$$
except for at most a fraction $\varepsilon$ of times $n$, where the distance $d$ between two points of $\Sigma_k \times I$ is the sum of the distances between their projections onto
the base and onto the fiber.

Furthermore,  $d_H(\mu_F,\mu_G)< \varepsilon$, where $d_H$ is Hutchinson metric given by (\ref{e20}) and $\mu_F$ and $\mu_G$ are the measures obtained from Proposition \ref{thm000} for $F$ and $G$.
\end{proposition}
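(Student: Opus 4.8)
The plan is to reduce both assertions to a single estimate on the fibrewise distance between the two invariant graphs $\gamma_F$ and $\gamma_G$ supplied by Proposition \ref{thm000}. The starting observation is that $F$ and $G$ act identically on the base, so that $F^n(\omega,x)=(\sigma^n\omega, f^n_\omega(x))$ and $G^n(\omega,x)=(\sigma^n\omega, g^n_\omega(x))$ share their first coordinate; consequently $d(F^n(\omega,x),G^n(\omega,x))=|f^n_\omega(x)-g^n_\omega(x)|$. Inserting the graph values and using the triangle inequality gives
\begin{equation*}
|f^n_\omega(x)-g^n_\omega(x)|\le |f^n_\omega(x)-\gamma_F(\sigma^n\omega)| + |\gamma_F(\sigma^n\omega)-\gamma_G(\sigma^n\omega)| + |\gamma_G(\sigma^n\omega)-g^n_\omega(x)|.
\end{equation*}
By the attraction property in Proposition \ref{thm000}, for each fixed $\omega\in\Omega$ and each $x$ the first and third terms tend to $0$ as $n\to\infty$, so for any threshold only finitely many $n$ make either of them large; these exceptional times have zero density. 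Thus the whole orbit estimate is governed by the middle, purely graph-theoretic, term evaluated along $\{\sigma^n\omega\}_{n\ge0}$.

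The key step is the pointwise comparison of the two graphs. Writing $\gamma_G(\omega)=\lim_n g_{\sigma^{-1}\omega}\circ\cdots\circ g_{\sigma^{-n}\omega}(0)$ and $\gamma_F(\omega)=\lim_n f_{\omega_{-1}}\circ\cdots\circ f_{\omega_{-n}}(0)$, I would telescope the two length-$n$ backward compositions, replacing one fibre map of $F$ by the corresponding fibre map of $G$ at a time. Setting $\eta:=\mathrm{dist}_{C^2}(F,G)$, each single replacement at depth $k$ contributes at most $\eta$, amplified by the Lipschitz norm of the remaining outer $F$-block $f_{\omega_{-1}}\circ\cdots\circ f_{\omega_{-(k-1)}}$; by the estimate (\ref{3.7}) applied to $F$ this norm is bounded by $C(\omega)e^{(\lambda+\varepsilon)(k-1)}$. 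Summing the resulting geometric series yields
\begin{equation*}
|\gamma_F(\omega)-\gamma_G(\omega)|\le \frac{\eta\,C(\omega)}{1-e^{\lambda+\varepsilon}} \qquad\text{for }\nu\text{-a.e. }\omega.
\end{equation*}

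The main obstacle is that the contraction is only non-uniform: the constant $C(\omega)$ is finite $\nu$-a.e. but not bounded, so this bound does not yield uniform closeness of the two graphs, only closeness in measure. To handle this I would fix a large level $C_0$, split $\Sigma_k$ into the good set $\{C\le C_0\}$ and its complement, and use $\nu(\{C>C_0\})\to0$ as $C_0\to\infty$ together with the trivial bound $|\gamma_F-\gamma_G|\le\mathrm{diam}(I)=1$ on the bad set. Choosing $C_0$ so that $\nu(\{C>C_0\})$ is small and then shrinking $\mathcal{U}$ so that $\eta$ is small makes both $\nu\big(\{|\gamma_F-\gamma_G|\ge\varepsilon'\}\big)$ and $\int_{\Sigma_k}|\gamma_F-\gamma_G|\,d\nu$ as small as desired. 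For the first assertion I then apply Birkhoff's ergodic theorem to the indicator $\mathbf{1}_{\{|\gamma_F-\gamma_G|\ge\varepsilon'\}}$: for $\nu$-a.e. $\omega$ the density of the times $n$ with $|\gamma_F(\sigma^n\omega)-\gamma_G(\sigma^n\omega)|\ge\varepsilon'$ converges to $\nu(\{|\gamma_F-\gamma_G|\ge\varepsilon'\})<\varepsilon$; combined with the zero density of the exceptional times coming from the outer two terms, this bounds the density of times $n$ with $d(F^n(\omega,x),G^n(\omega,x))\ge\varepsilon$ by $\varepsilon$.

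For the second assertion I would use that, by (\ref{measure}), $\mu_F$ and $\mu_G$ are the pushforwards of $\nu$ under $\omega\mapsto(\omega,\gamma_F(\omega))$ and $\omega\mapsto(\omega,\gamma_G(\omega))$, so for any $\phi\in\mathrm{Lip}_1(\Sigma_k\times I)$ one has $\int\phi\,d\mu_F-\int\phi\,d\mu_G=\int_{\Sigma_k}\big(\phi(\omega,\gamma_F(\omega))-\phi(\omega,\gamma_G(\omega))\big)\,d\nu$. Since $\phi$ is $1$-Lipschitz and the two points share their base coordinate, the integrand is bounded by $|\gamma_F(\omega)-\gamma_G(\omega)|$; taking the supremum over $\phi$ and recalling (\ref{e20}) gives $d_H(\mu_F,\mu_G)\le\int_{\Sigma_k}|\gamma_F-\gamma_G|\,d\nu<\varepsilon$ by the split above. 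The delicate point throughout is precisely the passage from the a.e.-finite but unbounded constant $C(\omega)$ to genuine control in measure, which is what forces the good/bad decomposition rather than a uniform estimate.
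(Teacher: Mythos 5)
Your argument is sound, but there is nothing in the paper to compare it against: the authors state Proposition \ref{pro12} with only the remark that it is ``an analogue of \cite[Thm.~3.1]{BHN}'' and supply no proof at all, so you have in effect filled a gap rather than reproduced or deviated from an existing argument. Your route is the natural one and, as far as I can tell, correct: the base dynamics of $F$ and $G$ coincide, so everything reduces to the fibre comparison; the triangle inequality through $\gamma_F(\sigma^n\omega)$ and $\gamma_G(\sigma^n\omega)$ isolates the graph-versus-graph term, since the attraction statement in Proposition \ref{thm000} makes the outer two terms eventually small (hence contributing only a finite, zero-density set of exceptional times for each fixed $x$, which is all the statement requires); the telescoping of the backward compositions, with the outer $F$-block controlled by the analogue of (\ref{3.7}) for $F$, gives $|\gamma_F(\omega)-\gamma_G(\omega)|\le \eta\,C(\omega)(1-e^{\lambda+\varepsilon})^{-1}$ (valid because Lemma \ref{cor0} guarantees $\lambda+\varepsilon<0$, so the series converges, and because $F$ is a step product, so $f_{\sigma^{-k}\omega}=f_{\omega_{-k}}$); and your good/bad decomposition in $C(\omega)$ is exactly the right device to convert an a.e.-finite but unbounded constant into smallness in measure and in $L^1$, with the important point that $C_0$ is chosen from $F$ alone before $\mathcal U$ is shrunk, so there is no circularity. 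Birkhoff applied to $\mathbf 1_{\{|\gamma_F-\gamma_G|\ge\varepsilon'\}}$ then yields the density bound, and the pushforward representation (\ref{measure}) reduces the Hutchinson estimate to $\int|\gamma_F-\gamma_G|\,d\nu$. Two small points to tighten in a final write-up: distinguish notationally the $\varepsilon$ of the proposition from the Lyapunov slack $\varepsilon$ in (\ref{3.7}), and record explicitly that the three thresholds (one per term of the triangle inequality) are chosen summing to less than the target $\varepsilon$.
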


\end{document}